\theoremstyle{plain}
\newtheorem{thm}{Theorem}[section]
\newtheorem{lem}[thm]{Lemma}
\theoremstyle{remark}
\numberwithin{equation}{section}
\def\pn{\par\noindent}
\newcommand{\al}{\alpha}
\newcommand{\Z}{\mathbb{Z}}
\newcommand {\la}{\langle}
\newcommand {\ra}{\rangle}
\newcommand{\be}{\beta}
\begin{document}

\leftline{ \scriptsize \it International Journal of Group Theory  Vol. {\bf\rm XX} No. X {\rm(}201X{\rm)}, pp XX-XX.}

\vspace{1.3 cm}

\title{Groups of order 2048 with three generators and three relations}
\author{S. FOULADI and  R. ORFI$^*$}

\thanks{{\scriptsize
\hskip -0.4 true cm MSC(2010): Primary: 20F05 ; Secondary: 20D15.
\newline Keywords: Schur multiplier, presentation, deficiency zero, finite
$p$-group, lower exponent-$p$ central series.\\
Received: 26 July 2011, Accepted: 21 June 2010.\\
$*$Corresponding author
\newline\indent{\scriptsize $\copyright$ 2011 University of Isfahan}}}

\maketitle

\begin{center}
Communicated by\;
\end{center}

\begin{abstract} It is shown that there are exactly seventy-eight 3-generator
2-groups of order $2^{11}$ with trivial Schur multiplier. We then
give 3-generator, 3-relation presentations for forty-eight of them
proving that these groups have deficiency zero.
\end{abstract}

\vskip 0.2 true cm


\pagestyle{myheadings}
\markboth{\rightline {\scriptsize FOULADI and  ORFI}}
         {\leftline{\scriptsize Groups of order 2048 with three generators and three relations }}

\bigskip
\bigskip


\section{\bf Introduction}
\vskip 0.4 true cm

A finite group is said to have deficiency zero if it has a
deficiency zero presentation, namely a presentation with an equal
number of generators and relations. A classical fact is that
finite groups of deficiency zero have trivial Schur multiplier,
for example see [\ref{J}, p.87]. So the Schur multiplier provides
a useful criterion in the search for finite groups of deficiency
zero. But the converse is not true since there are many examples
of finite groups with trivial Schur multiplier and non-zero
deficiency. These groups are all non-nilpotent. In fact, it is a
long-standing question about finite $p$-groups  with trivial Schur
multiplier whether they have deficiency zero, see [\ref{W},
Question 12]. In [\ref{G}], the authors prove a number of
$p$-groups have deficiency zero and give explicit presentations
for them with an equal number of generators and relations. It is
noted in [\ref{G}] that there are no $3$-generator $2$-groups of
order less than $2^9$ having trivial Schur multiplier and there
exist exactly two such groups of order $2^9$. Moreover in
[\ref{F}] we see that there are exactly eighteen $3$-generator
$2$-groups of order $2^{10}$ with trivial Schur multiplier all
having deficiency zero. Many finite
 $d$-generator, $d$-relation groups are known for $d=1,2,3$. Trivial
examples are the finite cyclic groups with $d=1$ and the symmetric
group of degree 3 with $d=2$. In fact many examples with $d=2$
have been given by several authors. Examples of finite groups with
$d=3$ are infrequent, see [\ref{Ja}] and the references therein.
It might be worth noting that there are no known examples of
finite groups with $d=4$ and finite nilpotent $4$-generator groups
require at least $5$ defining relations by
 a celebrated theorem of Golod-Shafarevich.
 Such groups have been constructed in [\ref{GN}, \ref{HN}], of
 orders
 $2^{14},
 2^{16}, 2^{17}, 2^{18}$ and $2^{19}$.

 In this paper, using computational methods we show that there
 are exactly seventy-eight $3$-generator $2$-groups of order $2^{11}$
 with trivial Schur multiplier. We then give $3$-generator,
 $3$-relation presentations for forty-eight of them proving that these groups
 all have deficiency zero.

\vspace{0.3cm}
 Our notation is  standard. $\mathbb{Z}_{n}$ is the cyclic group of order $n$.
  The direct product of $\ell$
copies of $\Z_{n}$ is denoted by $ \Z_{n}^\ell$. The Schur
multiplier of the group $G$ is denoted by $M(G)$. We write
SmallGroup$(n,m)$ for the $m$th
 group of order $n$ as quoted in the "Small Groups" library in \textsf{GAP} [\ref{GA}].


\section{\textbf{Method}}

\vspace*{0.4cm} In this section our first step is to determine all
$3$-generator groups of order $2^{11}$ with trivial Schur
multiplier. Then our second step is to show that some of
these groups have deficiency zero.\\
We describe below a method that enables one to determine
$3$-generator groups of order $2^{11}$ having trivial Schur
multiplier.  We use the computer algebra systems \textsf{GAP}
[\ref{GA}] and {\sc Magma} [\ref{MA}] which contain a data library
\lq \lq Small Groups\rq \rq   providing access to the descriptions
of the groups of order at most $2000$ except $2^{10}$, prepared by
Besche {\it et al} [\ref{BEO}]. Following [\ref{G}], our main
strategy is to determine some particular extensions, called
descendants, of specified $3$-generator $2$-groups $G$, $|G|\leq
2^9$, in the hope of finding $2$-groups of order $2^{11}$ with
trivial Schur multiplier. To do this we will use the following
theorems.

\begin{thm}\label{2.1} \textup{[\ref{K}, Theorem 3.2.1]}
Suppose that $N$ is a normal subgroup of a finite group $G$. If
$F$ is a free group of finite rank, $R$ is a normal subgroup of
$F$ for which $G\cong F/R$ and $S$ is a normal subgroup of $F$ for
which $SR/R$ corresponds to $N$, then there is an exact sequence
$$1\rightarrow(\frac{R\cap[F,S])}{([F,R]\cap[F,S])}\rightarrow
M(G)\rightarrow M(\frac{G}{N})\rightarrow \frac{(N\cap
G')}{[N,G]}\rightarrow1.$$
\end{thm}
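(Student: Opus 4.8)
The plan is to reduce everything to the classical Hopf formula for the Schur multiplier together with a few commutator-subgroup identities. Recall that if $H\cong F/T$ with $F$ free of finite rank and $T\trianglelefteq F$, then $M(H)\cong (T\cap F')/[F,T]$, where $F'=[F,F]$. Applying this to $G=F/R$ gives $M(G)\cong (R\cap F')/[F,R]$, and, since $SR\trianglelefteq F$ and $G/N\cong (F/R)/(SR/R)\cong F/SR$, the same formula gives $M(G/N)\cong (SR\cap F')/[F,SR]$. Throughout I would use the standard facts $[F,SR]=[F,S][F,R]$, $[F,S]\leq S\cap F'$, and $[F,R]\leq R\cap F'$.

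Next I would translate the two outer terms into subgroups of $F$. Writing $N=SR/R$ and $G'=F'R/R$, one checks that $N\cap G'=(SR\cap F'R)/R$ and, using $[F,R]\leq R$, that $[N,G]=[F,S]R/R$, so the rightmost term is $(SR\cap F'R)/([F,S]R)$. The leftmost term is already presented as a quotient of subgroups of $F$. With all four terms now expressed inside $F$, I would define the three maps as the ones induced by inclusion of representatives: the first sends $x\,([F,R]\cap[F,S])\mapsto x\,[F,R]$, well defined because $[F,S]\leq F'$ forces $R\cap[F,S]\leq R\cap F'$; the second is the natural map coming from $R\leq SR$; and the third sends $y\,[F,SR]\mapsto y\,[F,S]R$, well defined since $F'\leq F'R$ and $[F,SR]\leq[F,S]R$.

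Exactness would then be checked spot by spot. Injectivity of the first map is immediate, since $x\in R\cap[F,S]$ lying in $[F,R]$ already lies in $[F,R]\cap[F,S]$. Exactness at $M(G)$ reduces to the set identity $(R\cap F')\cap[F,S][F,R]=(R\cap[F,S])[F,R]$; the nontrivial inclusion follows by writing an element as $ab$ with $a\in[F,S]$ and $b\in[F,R]\leq R$, so that $a=(ab)b^{-1}\in R$. Exactness at $M(G/N)$ reduces to $(SR\cap F')\cap[F,S]R=(R\cap F')[F,SR]$, obtained by writing an element as $ar$ with $a\in[F,S]\leq F'$, which forces $r\in R\cap F'$. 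Surjectivity of the last map follows by writing $z\in SR\cap F'R$ as $wr$ with $w\in F'$ and $r\in R$, and observing that $w\in SR\cap F'$ maps onto the class of $z$.

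I expect the main obstacle to be the faithful translation of the relative terms $N\cap G'$ and $[N,G]$ from $G$ back into subgroups of $F$ and their precise matching with the kernel and cokernel of the Hopf-formula maps; the commutator computations themselves are routine, but they must be carried out in the correct normality context, since at several points I rely on products such as $[F,S][F,R]$ and $(R\cap F')[F,SR]$ being genuine subgroups and on the fact that membership of a product in a subgroup forces membership of one of its factors.
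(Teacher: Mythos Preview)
The paper does not give its own proof of this theorem: it is quoted verbatim from Karpilovsky's monograph (reference~[\ref{K}], Theorem~3.2.1) and used as a black box, so there is no argument in the paper to compare against. Your proposal, by contrast, supplies an actual proof sketch, and it is the standard one: identify all four terms via Hopf's formula and natural isomorphisms inside the free group $F$, define the maps by inclusion of representatives, and verify exactness by elementary commutator calculus.

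Your outline is correct. The translations $N\cap G'=(SR\cap F'R)/R$ and $[N,G]=[F,S]R/R$ are right (the latter because $[SR,F]=[S,F][R,F]$ and $[R,F]\le R$), and the three exactness checks reduce exactly to the set equalities you state. The only places worth tightening in a written version are the ones you flag yourself: that $[F,S]$, $[F,R]$ and $[F,SR]$ are normal in $F$ (so all the products you form are subgroups and the ``one factor lies in $R$ (resp.\ $F'$), hence so does the other'' arguments go through), and that the map $M(G)\to M(G/N)$ really is the canonical one induced by $R\hookrightarrow SR$ and $[F,R]\le[F,SR]$. None of this is a gap; it is exactly the routine bookkeeping you anticipate.
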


\begin{thm}\label{2.2} \textup{[\ref{K}, Corollary 3.2.2]} Suppose that
$N$ is a normal subgroup of a finite group $E$. If $M(E)=1$, then
$M(\frac{E}{N})\cong \frac{(N\cap E')}{[N, E]}.$
\end{thm}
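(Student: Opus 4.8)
The plan is to derive this directly from Theorem \ref{2.1} by specializing the group $G$ there to $E$. First I would fix a free group $F$ of finite rank together with a normal subgroup $R \trianglelefteq F$ satisfying $E \cong F/R$, and a normal subgroup $S \trianglelefteq F$ with $SR/R$ corresponding to $N$; such a presentation exists because $E$ is finite. Applying Theorem \ref{2.1} to the pair $(E,N)$ then produces the exact sequence
\[
1 \longrightarrow \frac{R \cap [F,S]}{[F,R] \cap [F,S]} \longrightarrow M(E) \longrightarrow M(E/N) \longrightarrow \frac{N \cap E'}{[N,E]} \longrightarrow 1.
\]

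Next I would invoke the hypothesis $M(E)=1$. With the central term trivial, the homomorphism $M(E) \to M(E/N)$ is the trivial map out of the trivial group, so its image is $1$. Exactness at $M(E/N)$ says that the kernel of the map $M(E/N) \to (N \cap E')/[N,E]$ equals this image, hence the map is injective. Exactness at $(N \cap E')/[N,E]$, together with the terminal $1$ of the sequence, forces the very same map to be surjective. Being both injective and surjective, it is an isomorphism, which is exactly the asserted identification $M(E/N) \cong (N \cap E')/[N,E]$.

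There is essentially no genuine obstacle here: the statement is a formal consequence of the five-term exact sequence once the middle term $M(E)$ is annihilated. The only point requiring slight care is to check that the hypothesis $M(E)=1$ really collapses the sequence to the short exact piece $1 \to M(E/N) \to (N \cap E')/[N,E] \to 1$, so that \emph{both} exactness conditions at the two surviving nontrivial terms are genuinely used to conclude bijectivity. Beyond tracking images and kernels, no computation is needed.
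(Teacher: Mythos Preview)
Your argument is correct: once $M(E)=1$, the five-term sequence of Theorem~\ref{2.1} collapses to $1 \to M(E/N) \to (N\cap E')/[N,E] \to 1$, and exactness at both remaining terms gives the isomorphism. The paper does not supply a proof of this statement at all---it is quoted as Corollary~3.2.2 of [\ref{K}], where it is recorded precisely as the immediate consequence of Theorem~3.2.1 (the paper's Theorem~\ref{2.1}) that you have written out; so your approach is the intended one.
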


Recall that the lower exponent-$p$ central series of $G$ is a
descending series of subgroups defined  recursively by
$P_{0}(G)=G$,  $P_{i+1}(G)=[P_{i}(G), G] P_{i}(G)^{p}$ for $i\geq
0$. If $c$ is the smallest integer such that $P_{c}(G)=1$, then
$G$ has exponent-$p$ class $c$. A group $E$ is said to be a
descendant of a finite $d$-generator $p$-group $G$ with
exponent-$p$ class $c$ if the quotient $E/P_{c}(E)$ is isomorphic
to $G$. A group is called an immediate descendant of $G$ if it is
a descendant of $G$ and has exponent-$p$ class $c+1$. Both
\textsf{GAP} and {\sc Magma} compute the lower exponent-$p$
central series of a finite group using the $p$-quotient algorithm
described in [\ref{MB}] and are able to construct all immediate
descendants of a given $p$-group by the $p$-group generation
algorithm [\ref{O}].

Now we determine all $3$-generator $2$-groups $E$ of order
$2^{11}$ with trivial Schur multiplier.
\begin{lem}\label{2.3}
Let $E$ be a $3$-generator $2$-group of order $2^{11}$ with
trivial Schur multiplier. Then $E$ is an immediate descendant of a
$3$-generator group $G$ of order $2^n$ $(n\leq 10)$ which
satisfies $M(G)\cong \Z_{2}^\ell$, where \break $0\leq \ell \leq
11-n$.
\end{lem}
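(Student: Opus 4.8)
The plan is to take $G$ to be the quotient of $E$ by the last nontrivial term of its lower exponent-$2$ central series and then to compute $M(G)$ directly from Theorem~\ref{2.2}. Since $E$ is a nontrivial $3$-generator group of order $2^{11}$, we have $P_1(E)\neq 1$, so the exponent-$2$ class $k$ of $E$ satisfies $k\geq 2$; set $N=P_{k-1}(E)$ and $G=E/N$. Using the standard identity $P_i(E/P_{k-1}(E))=P_i(E)/P_{k-1}(E)$ for $i\leq k-1$, together with the fact that the series strictly descends until it reaches $1$, the group $G$ has exponent-$2$ class exactly $k-1$; as $E/N=G$ and $E$ has class $k=(k-1)+1$, the group $E$ is by definition an immediate descendant of $G$. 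Because $N=P_{k-1}(E)\leq P_1(E)=\Phi(E)$, the quotient $E/N$ has the same minimal number of generators as $E$, namely three, and because $N\neq 1$ we obtain $|G|=2^{11}/|N|=2^{n}$ with $n\leq 10$.

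The crux is the structure of $N$. From $P_k(E)=[P_{k-1}(E),E]\,P_{k-1}(E)^{2}=1$ I read off simultaneously that $[N,E]=1$ and $N^{2}=1$, so that $N$ is a central, elementary abelian subgroup isomorphic to $\Z_2^{\,11-n}$. Applying Theorem~\ref{2.2} to the normal subgroup $N$ of $E$, and using $M(E)=1$, gives $M(G)=M(E/N)\cong (N\cap E')/[N,E]$. Since $[N,E]=1$ this collapses to $M(G)\cong N\cap E'$, a subgroup of $N$. Every subgroup of an elementary abelian $2$-group is again elementary abelian, so $N\cap E'\cong \Z_2^{\,\ell}$ for some $\ell\geq 0$, and $|N\cap E'|\leq |N|=2^{\,11-n}$ forces $\ell\leq 11-n$. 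This yields both the form $M(G)\cong\Z_2^{\,\ell}$ and the bound $0\leq\ell\leq 11-n$.

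The only point requiring care is the first step: one must check that $G=E/P_{k-1}(E)$ truly sits one exponent-$2$ class below $E$ and inherits the $3$-generator property, the latter resting on the containment $N\leq\Phi(E)$. Once $N$ is pinned down, the heart of the argument is immediate, since the relations $[N,E]=1$ and $N^{2}=1$ extracted from $P_k(E)=1$ reduce Theorem~\ref{2.2} to the identification $M(G)\cong N\cap E'$. I anticipate no computational obstacle; the entire content lies in the choice of $N$ and the bookkeeping of orders.
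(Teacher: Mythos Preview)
Your proof is correct and follows essentially the same route as the paper's: set $G=E/P_{c}(E)$ for $E$ of exponent-$2$ class $c+1$, use $P_{c+1}(E)=1$ to see that $P_c(E)$ is central and elementary abelian, and apply Theorem~\ref{2.2} to obtain $M(G)\cong P_c(E)\cap E'\hookrightarrow P_c(E)$. You are in fact a bit more explicit than the paper in noting both consequences $[N,E]=1$ and $N^{2}=1$ of $P_{c+1}(E)=1$, and in verifying the $3$-generator property of $G$ via $N\leq\Phi(E)$.
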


\begin{proof}
Suppose that $E$ has exponent-$p$ class $c+1.$ Using Theorem
\ref{2.2}, with $N=P_{c}(E)$, we have $M(E/P_{c}(E))\cong
(P_{c}(E)\cap E')/[ P_{c}(E), E].$ By our hypothesis on the class
of $E$, we observe that $P_{c}(E)^2=1$, from which we conclude
that $P_{c}(E)$ is an elementary abelian $2$-group and that
$M(E/P_{c}(E))\cong P_{c}(E)\cap E'$. Now the group $G:=E/P_{c}(
E)$ is a $3$-generator group with $M(G)\hookrightarrow P_{c}(E)$
and so $|M(G)|\leq |E|/|G|$.
\end{proof}

The above lemma reduces the number of groups that need to be
considered
 dramatically. We use {\sc Magma} and \textsf{GAP} to construct all immediate descendants $E$ of
 such groups $G$ and rule out those having non-trivial Schur
 multiplier. Since all groups $G$ of order $2^n$ $(n\leq 9)$ are available in
\textsf{GAP}, first we determine all immediate descendants of
groups $G$ which satisfy $M(G)\cong \Z_{2}^\ell$, where $0\leq
\ell \leq 11-n$. In the list below there are forty $3$-generator
groups of order $2^{11}$ with trivial Schur multiplier with the
above property. We use the notation $[n, m, k]$ for the group $E$,
where $E$ is the $k$th immediate descendant of the group
$G$=SmallGroup$(n, m)$.
 \vspace*{0.2cm}

\noindent [512, 6489, 2], [512, 6489, 3], [512, 6490, 2], [512,
6490, 3], [512, 9113, 4],\break [512, 9113, 5], [512, 9114, 4],
[512, 9114, 5], [512, 9121, 4], [512, 9121, 5],\break [512, 9122,
4], [512, 9122, 5], [512, 9137, 4], [512, 9137, 5], [512, 9146,
4], \break [512, 9146, 5] , [512, 12397, 4], [512, 12397, 5],
[512, 12398, 4],\break [512, 12398, 5], [512, 12399, 4], [512,
12399, 5], [512, 12400, 4],\break [512, 12400, 5], [512, 12401,
4], [512, 12401, 5], [512, 12402, 4],\break [512, 12402, 5], [512,
12403, 2], [512, 12403, 3], [512, 12404, 2],\break [512, 12404,
3], [512, 12413, 4], [512, 12413, 5], [512, 12414, 4],\break [512,
12414, 5], \hspace{.2 cm}[512, 12423, 4], [512, 12423, 5], [512,
12424, 4], \break [512,
12424, 5].\\

Now by Lemma \ref{2.3}, we have to consider 3-generator groups $G$
of order $2^{10}$ with $M(G)\cong \Z_{2}^\ell$, where $0\leq \ell
\leq 1$. All 3-generator groups of order $2^{10}$ with trivial
Schur multiplier are classified in [\ref{F}]. By using
\textsf{GAP} we see that there is no immediate descendant of order
$2^{11}$ of these eighteen groups of order $2^{10}$. Since groups
of order $2^{10}$ are not available in \textsf{GAP}, we state the
following theorem to construct groups of order $2^{10}$ with Schur
multiplier of order 2.

\begin{thm}\label{2.4}
Let $E$ be a $3$-generator $2$-group of order $2^{10}$ with
$M(E)\cong \Z_{2}$. Then $E$ is an immediate descendant of a
$3$-generator group $G$ of order $2^n$ with $n\leq 9$ which
satisfies either $M(G)\cong \Z_{2}^\ell$,  $0\leq \ell \leq 11-n$
or
 $M(G)\cong \Z_{4}\times \Z_{2}^\ell$, $0\leq \ell \leq 9-n$.
\end{thm}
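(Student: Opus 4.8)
The plan is to mimic the proof of Lemma \ref{2.3}, but since now $M(E)\cong\Z_2$ is nontrivial I cannot invoke Theorem \ref{2.2}; instead I would run the full four-term exact sequence of Theorem \ref{2.1}. Write $c+1$ for the exponent-$p$ class of $E$, set $N=P_c(E)$ and $G=E/N$, so that $E$ is by definition an immediate descendant of $G$. Exactly as in Lemma \ref{2.3}, the relation $P_{c+1}(E)=[P_c(E),E]\,P_c(E)^2=1$ forces $[N,E]=1$ and $N^2=1$, so $N$ is a central elementary abelian $2$-subgroup. A $3$-generator group of order $2^{10}$ has $\Phi(E)=P_1(E)\neq 1$, hence class at least $2$, so $c\ge 1$ and $N=P_c(E)\le P_1(E)=\Phi(E)$; consequently the Frattini quotient is unchanged and $G$ is again $3$-generator, while $1\neq N\lneq E$ gives $|G|=2^n$ with $3\le n\le 9$.

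First I would feed $E$ and $N$ into Theorem \ref{2.1} (with $E$ in the role of the ambient group and $G=E/N$ the quotient). Using $[N,E]=1$ the final term collapses to $N\cap E'$, yielding
$$1\rightarrow K\rightarrow M(E)\rightarrow M(G)\rightarrow N\cap E'\rightarrow1,\qquad K=\frac{R\cap[F,S]}{[F,R]\cap[F,S]}.$$
Since the first map is injective, $K$ embeds in $M(E)\cong\Z_2$, so $|K|\in\{1,2\}$; exactness then realizes $I:=M(E)/K$ as a subgroup of $M(G)$ of order $2/|K|$ with $M(G)/I\cong N\cap E'$. As $N$ is elementary abelian, so is $N\cap E'$, and I would write $N\cap E'\cong\Z_2^{t}$ with $t\le\log_2|N|=10-n$.

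Next I would read off the possible structures of $M(G)$. Because $M(G)$ is a finite abelian $2$-group and $M(G)/I$ has exponent $2$, we obtain $M(G)^2\le I$ with $|I|\le 2$. Hence either $M(G)^2=1$, so $M(G)$ is elementary abelian, or $M(G)^2=I\cong\Z_2$, in which case the fundamental theorem of finite abelian groups forces a single cyclic factor of order $4$, i.e. $M(G)\cong\Z_4\times\Z_2^{\,t-1}$. Tallying ranks: if $|K|=2$ then $M(G)\cong\Z_2^{\,t}$; if $|K|=1$ and $M(G)$ is elementary abelian then $M(G)\cong\Z_2^{\,t+1}$; and if $|K|=1$ with $M(G)^2\cong\Z_2$ then $M(G)\cong\Z_4\times\Z_2^{\,t-1}$. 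The estimate $t\le 10-n$ now gives $\ell\le 11-n$ in the two elementary abelian subcases and $\ell\le 9-n$ in the $\Z_4$ subcase, matching the two alternatives in the statement.

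The main obstacle is the bookkeeping inside Theorem \ref{2.1}: one must check that $K$ genuinely injects into $M(E)$ and that the image of $M(E)$ in $M(G)$ accounts for the failure of $M(G)$ to be elementary abelian, so that squaring annihilates $M(G)$ modulo a subgroup of order at most $2$. Once this is secured, both admissible isomorphism types and their rank bounds follow purely from abelian group theory, and the fact that $G$ remains $3$-generator is immediate from $N\le\Phi(E)$.
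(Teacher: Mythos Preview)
Your proposal is correct and follows essentially the same route as the paper: apply the four-term exact sequence of Theorem \ref{2.1} with $N=P_c(E)$, use that $M(E)\cong\Z_2$ to bound the image of $M(E)$ in $M(G)$ (your $I$, the paper's $\mathrm{Ker}\,\beta$) by order $2$, and then read off from $M(G)/I\hookrightarrow P_c(E)$ elementary abelian that $M(G)$ is either elementary abelian of rank at most $11-n$ or $\Z_4\times\Z_2^\ell$ with $\ell\le 9-n$. Your write-up is in fact more explicit than the paper's on the $3$-generator property of $G$ and on the rank arithmetic; the ``obstacle'' you flag about $K$ injecting into $M(E)$ is not really one, since that injectivity is part of the exactness asserted in Theorem \ref{2.1}.
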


\begin{proof}
Suppose that $E$ has exponent-$p$ class $c+1.$ By Theorem
\ref{2.1}, we have the following exact sequence: $1\rightarrow
(R\cap[F,S])/([F,R]\cap[F,S])\rightarrow M(E)\xrightarrow{\al}
M(E/P_{c}(E))\xrightarrow{\be} (P_{c}(E)\cap
E')/[P_{c}(E),E]\rightarrow 1,$ where $F$ is a free group of
finite rank, $R$ is a normal subgroup of $F$ for which $E\cong
F/R$ and $S$ is a normal subgroup of $F$ for which $SR/R$
corresponds to $P_{c}(E)$. On setting $G=E/P_{c}(E)$ we see that
$M(G)/Ker{\be}\cong P_{c}(E)\cap E'$ and $M(E)/Ker{\al}\cong
Ker{\be}$. Therefore $Ker{\be}=1$ or $Ker{\be}\cong \Z_{2}$ since
$M(E)\cong \Z_{2}$. Now since $M(G)/Ker{\be}\hookrightarrow
P_{c}(E)$ and $P_{c}(E)$ is elementary abelian, we deduce that
$|M(G)|\leq 2|P_{c}(E)|$ and $M(G)$ is either elementary abelian
or $M(G)\cong \Z_{4}\times \Z_{2}^\ell$.
\end{proof}

Now it only remains to determine all immediate descendants of
groups of order $2^{10}$ with Schur multiplier of order 2. In the
list below there are thirty-eight groups of order $2^{11}$ with
trivial Schur multiplier with the above property. We use the
notation $[n, m, k, t]$ for the group $E$, where $E$ is the $t$th
immediate descendant of the group $L$ such that $L$ is the $k$th
immediate descendant of $G$=SmallGroup$(n, m)$, in fact $L$ is a
3-generator group of order $2^{10}$ with Schur multiplier of order
2.
 \vspace*{0.2cm}

\noindent [512, 53479, 3, 1], [512, 53479, 3, 2], [512, 53480, 1,
1], [512, 53480, 1, 2],\break [512, 53480, 2, 1], [512, 53480, 2,
2], [256, 2525, 8, 1], [256, 2525, 8, 2],\break [256, 2525, 9, 1],
[256, 2525, 9, 2], [256, 2528, 5, 1], [256, 2528, 5, 2],\break
[256, 2528, 6, 1], [256, 2528, 6, 2], [256, 3638, 8, 1], [256,
3638, 8, 2],\break [256, 3639, 8, 1], [256, 3639, 8, 2], [256,
3640, 5, 1], [256, 3640, 5, 2],\break [256, 3640, 6, 1], [256,
3640, 6, 2], [256, 3641, 8, 1], [256, 3641, 8, 2],\break [256,
3641, 9, 1], [256, 3641, 9, 2], [256, 3641, 10, 1], [256, 3641,
10, 2],\break [256, 3643, 8, 1], [256, 3643, 8, 2], [256, 3643, 9,
1], [256, 3643, 9, 2],\break
 [256, 3643, 10, 1], [256, 3643, 10,
2],  [256, 2522, 6, 1],  [256, 2522, 6, 2],
 \break  [256, 2523, 6,
1], [256, 2523, 6, 2]. \vspace*{0.2cm}

 The second step is to give
$3$-generator, $3$-relation
 presentations for the groups obtained in the first step. We used
 mainly the method described in [\ref{HN}] to find such a
 presentation for each group $G$ under consideration. Our
 first attempt towards obtaining such presentations for $G$ was to
 find several triples  of generators for each group. On each
 generating triple, we computed a presentation $\la X | R \ra$ using
  the  relation finding algorithm of Cannon [\ref{C}] which is available
 in \textsf{GAP} and {\sc Magma}. Then an attempt was made to find a subset $S$ of $R$
 having three elements such that $\la X | S\ra $ defines $G$. In
 searching for generating triples for each group a small set of
 group elements was chosen  by a knowledge of conjugacy classes
 and checked for generating triples. This technique was also used
 in [\ref{F}] to determine deficiency zero presentations for all
 3-generator, 2-groups of order $2^{10}$ with trivial Schur
 multiplier. The authors obtained seventeen deficiency zero presentations
 from eighteen groups in [\ref{F}] by this method. It seems that
 this method is useful to find deficiency zero
 presentations. Moreover in this paper to find the order of the
 groups defined by the presentations $\la X | S\ra $ as above, we use Knuth-Bendix algorithm
   in {\sc KBMAG}  package, which was  written by Derek Holt [\ref{GA}]. By the above observation we show
 that forty-eight groups from seventy-eight 3-generator groups of
 order $2^{11}$ with trivial Schur multiplier, have deficiency zero.

\section{\textbf{Results}}

\vspace*{0.2cm} In three tables below we list all $3$-generator
$2$-groups of order $2^{11}$ with trivial Schur multiplier. In
tables 1 and 2 we list forty-eight groups with deficiency zero.
Also table 3 give a presentation for the remaining thirty groups
with more than three relations in which we show that the above
method failed to find a balanced presentation for these groups.
Entries of the form $[n, m, k]$ and $[n, m, k, t]$ were described
in the previous section. An attempt was made to choose a
presentation for each group with a reasonably small length.

\hspace{4.5cm} \noindent \textbf{ Table 1 }

\noindent \begin{tabular}{ l  l  l  l  }

\hline
 Group No. & Relators & $[n, m, k]$
\\ \hline

$\#1$ & $b^{-1}acabc^{-1}$, $ c^2ab^2a $, $ab^{-1}cacb^{-1}$ & $[ 512, 6489, 2 ]$ \\

$\#2$ &  $bca^{-1}c^{-1}ba$, $bac^{-1}b^{-3}ca$, $ cbca^{-1}c^2a^{-1}b$ & $[ 512, 6489, 3 ]$  \\

$\#3$ &  $ba^{-1}c^{-2}ba$, $ a^2cbcb^{-1}$, $ bcbaca^{-1}$ & $[ 512, 6490, 2 ]$\\

$\#4$ & $bac^{-1}b^{-1}ca$, $ b^2cac^{-1}a$, $b^{-1}cbc^3a^{-2}$ & $[ 512, 6490, 3 ]$\\

$\#5$ & $b^2ca^{-1}ca$, $c^{-1}a^2b^{-1}cb$, $ ba^{-1}b^{-1}c^4a$ & $[512, 9113, 4 ]$\\

$\#6$ & $ba^{-1}b^3a$, $b^{-1}c^3bc^{-1}$, $a^3b^{-1}cba^{-1}c^{-1}$ & $[ 512, 9113, 5 ]$\\

$\#7$ & $ bca^{-1}b^{-1}ca$,  $b^3cbc$,  $a^3b^{-1}a^{-1}cbc^{-1}$ & $[512, 9114, 4 ]$\\

$\#8$ & $ cbcb^{-1}$,  $ba^{-1}b^3a$,  $a^3c^3a^{-1}c^{-1}$ & $[ 512, 9114, 5 ]$\\

$\#9$ & $ ba^{-1}c^{-1}bc^{-1}a$,  $ab^{-2}cac^{-1}$,  $a^2c^3bc^{-1}b^{-1}$ & $[ 512, 9122, 4 ]$\\

$\#10$ & $  ba^{-1}c^{-1}bc^{-1}a$, $ca^{-1}c^{-1}b^2a^{-1}$,  $a^2c^{-1}bc^3b^{-1}$ & $[ 512, 9122, 5 ]$\\

$\#11$ & $ a^{-1}cbcab^{-1}$, $b^3a^{-1}ba$, $ cac^3a^{-3}$ & $[ 512, 9137, 4 ]$\\

\end{tabular}\\

\hspace{4.5cm} \noindent \textbf{ Table 1 }

\noindent \begin{tabular}{ l  l  l  l  }

\hline
 Group No. & Relators & $[n, m, k]$
\\ \hline

$\#12$ & $ a^{-1}cbcab^{-1}$, $b^3a^{-1}ba$, $ a^3ca^{-1}c^3$ & $[ 512, 9137, 5 ]$\\

$\#13$ & $ cbcb^{-1}$, $a^{-1}ba^{-2}b^2a^{-1}b$, $c^3aca^{-3}$ & $[512, 9146, 4 ]$\\

$\#14$ & $ cbcb^{-1}$, $ a^{-1}ba^{-2}b^2a^{-1}b$, $a^3c^3a^{-1}c$ & $[ 512, 9146, 5 ]$\\

$\#15$ & $a^3bab, acab^{-1}c^{-1}b, b^2cac^{-3}a$ & $[ 512, 12397, 4]$\\

$\#16$ & $ a^3bab, acab^{-1}c^{-1}b, b^2c^{-1}ac^3a$  & $[ 512,12397, 5 ]$\\

$\#17$ & $ baba^{-1}, a^2ca^{-1}cbab^{-1}, b^3c^3b^{-1}c^{-1}$ & $[ 512, 12398, 4 ]$ \\

$\#18$ & $cac^{-1}a, b^{-1}a^{-1}ba^2b^2a, bac^{-3}bca$ & $[ 512, 12398, 5 ]$\\

$\#19$ & $ bab^{-1}a, a^{-1}c^3ac^{-1}, a^3c^{-1}abc^{-1}b^{-3}$ & $[512, 12400, 4 ]$\\

$\#20$ & $bab^{-1}a, a^{-1}c^3ac^{-1}, a^2b^{-1}a^{-1}ca^{-1}bcb^{-2}$ & $[ 512, 12400, 5 ]$\\

$\#21$ & $cbc^{-1}b, ac^3a^{-1}c^{-1}, ba^{-1}ba^2b^2a$ & $[512, 12401, 4 ]$\\

$\#22$ & $cbc^{-1}b, a^{-1}c^3ac^{-1}, ba^{-1}ba^2b^2a$ & $[ 512, 12401, 5 ]$\\

$\#23$ & $cbc^{-1}b, ba^{-1}b^3a^{-1}, a^3cac^{-3}$ & $[ 512, 12402, 4]$\\

$\#24$ & $ab^{-3}ab^{-1}, bc^{-3}bc^{-1}, a^2bca^{-1}ca^{-1}b^{-1}$ & $[512, 12404, 2 ]$\\

$\#25$ & $bab^3a, cbc^3b, bacacb^{-1}a^{-2}$ & $[ 512, 12404, 3 ]$\\

$\#26$ & $ba^{-1}bc^2a, ac^2bab^{-1}, (ca)^2cbcb^{-1}$ & $[ 512, 12413, 5 ]$\\

$\#27$ & $b^{-1}a^{-1}b^3a, ca^{-1}cbab^{-1}, a^2c^{-2}(bc^{-1})^2$ & $[ 512, 12414, 4 ]$\\

$\#28$ &  $cbac^{-1}ab^{-1}, acbca^{-1}b^{-1}, a^2c^{-1}a^{-1}bc^{-1}ab$ & $[ 512, 12424,4 ]$\\

\end{tabular}\\

 \hspace{4.5cm} \noindent \textbf{ Table 2 }

\noindent \begin{tabular}{ l  l  l  l  }

\hline
 Group No. & Relators & $ [n, m, k, t]$
\\ \hline

$\#29$ &  $cbc^{-1}b, a^3b^{-1}c^{-1}ba^{-1}c, a^3bc^{-1}a^{-1}c^{-1}b $ & $[ 512, 53480, 2, 1 ]$\\

$\#30$ &  $cbc^{-1}b, a^{-3}bcacb, a^3c^{-1}a^{-1}cb^2 $  & $[  512, 53480, 2, 2 ]$ \\

 $\#31$ & $ cbcb^{-1}, ba^{-1}c^{-1}bca, a^2b^{-1}acbac $ & $ [ 256, 2528, 5, 1 ]$\\

$\#32$  & $a^2c^2, bcb^{-1}ac^{-1}a, bacb^3ca^{-1} $ & $[ 256, 2528, 5, 2 ]$ \\

$\#33$ & $ a^2(bc)^2, bcb^{-1}aca^{-1}, a^3c^{-2}bab^{-1} $  & $[256, 2528, 6, 1 ]$ \\

$\#34$ & $ ba^{-1}cb^{-1}ca, cb^2ca^{-2}, a^3bcba^{-1}c^{-1} $ &  $[ 256, 2528, 6, 2 ]$\\

 $\#35$  & $c^{-1}bcb, ac^{-1}ac^{-1}b^2, a^2ca^{-1}bc^{-1}ab$ & $ [ 256, 3640, 5, 1 ]$ \\

$\#36$ &  $ ba^{-1}cbc^{-1}a, ac^3a^{-1}c^{-1}, b^{-3}c^{-1}bca^2 $ & $[ 256, 3640, 5, 2 ]$ \\

$\#37$ & $ a^{-1}b^3ab^{-1}, b^{-1}a^{-1}cabc, c^{-1}ac^{-3}bab$ & $ [ 256, 3640, 6, 1 ]$ \\

\end{tabular}\\

\hspace{4.5cm} \noindent \textbf{ Table 2 }

\noindent \begin{tabular}{ l  l  l  l  }

\hline
 Group No. & Relators & $ [n, m, k, t]$
\\ \hline

$\#38$ & $ a^{-1}b^3ab^{-1}, b^{-1}a^{-1}cabc, c^{-2}ac^{-1}bcab $ & $[ 256, 3640, 6, 2 ]$ \\

$\#39$ & $ a^{-1}cba^{-1}c^{-1}b, b^3cb^{-1}c^{-1}, a^3b^{-1}c^2ab^{-1} $ & $[ 256, 3641, 10, 1 ]$ \\

$\#40$ & $ caca^{-1}, a^{-1}b^3ab^{-1}, acabc^{-3}b $ & $[ 256, 3641, 10, 2 ]$\\

$\#41$ & $ cac^3a, bcba^{-1}b^{-2}ca, babca^{-1}c^{-1}a^{-2} $ & $[ 256, 3643, 8, 1 ]$ \\

$\#42$ & $ ac^{-3}ac^{-1}, bcba^{-1}b^{-2}ca, babca^{-1}c^{-1}a^{-2}$ & $ [ 256, 3643, 8, 2 ]$ \\

$\#43$ & $ ac^{-3}ac^{-1}, a^{-1}cbcb^2a^{-1}b^{-1}, babca^{-1}c^{-1}a^{-2} $ & $[ 256, 3643, 10, 1 ]$ \\

$\#44$ & $ cac^3a, b^3cb^{-1}a^{-1}ca, babca^{-1}c^{-1}a^{-2} $ & $ [ 256, 3643, 10, 2 ]$\\

 $\#45$ &  $a^2(ab^{-1})^2, bc^3b^{-1}c^{-1}, a^3b^2cac^{-1} $  & $[ 256, 2522, 6, 1 ]$\\

 $\#46$ & $ a^2(ab^{-1})^2, bc^3b^{-1}c^{-1}, a^3c^{-1}ab^2c $ & $[ 256, 2522, 6, 2 ]$ \\

$\#47$  &  $ ca^{-1}bca^{-1}b^{-1}, cab^{-2}ca^{-1}, a^2c^2(ba)^2 $ & $ [ 256, 2523, 6, 1 ]$\\

$\#48$ & $a^2(ab^{-1})^2, b^{-1}c^3bc^{-1}, a^3cabcb $ & $[ 256,
2523, 6, 2 ]$\\

\end{tabular}\\

\hspace{4.5cm} \noindent \textbf{ Table 3 }

\noindent \begin{tabular}{ l  l  l  l  }

\hline
  Group No. & Relators & $ [n, m, k, t]$
\\ \hline

$\#49$ & $ bac^{-1}b^{-1}ca, ba^{-1}b^3a, ab^5a^{-1}b^{-1}, a^2bcb^3c^{-1},   $ & $ [ 512, 9121, 4 ]$ \\
       & $cabc^{-3}ba$ & \\

$\#50$ & $ bac^{-1}b^{-1}ca, ba^{-1}b^3a, ab^5a^{-1}b^{-1}, a^2bcb^3c^{-1},  $ & $[ 512, 9121, 5 ] $ \\
       & $ c^{-1}bc^{-2}acab$ & \\

$\#51$ & $a^3b^{-1}a^{-1}b, ac^3a^{-1}c^{-1}, a^5bab^{-1}, a^2c^2a^{-2}c^{-2},  $ & $ [ 512, 12399, 4 ]$\\
       & $ aca^{-1}bcb^{-3}$ & \\

 $\#52$ &  $bc^3b^{-1}c^{-1}, b^3a^3b^{-1}a^{-1}, aca^{-1}c^{-1}a^{-1}cac^{-1},  $  & $[ 512, 12399, 5 ]$\\
        & $ ab^{-1}c^{-1}a^{-1}cb^3, a^2cac^{-2}ac $ &  \\

$\#53$  &  $ bab^3a, b^2cb^2c^{-1}, a^{-1}c^3ac^{-1}, bc^{-1}bca^{-4},$ & $ [ 512, 12402, 5 ]$\\
        & $(bc)^2(bc^{-1})^2 $ & \\

$\#54$ & $  a^3b^{-1}a^{-1}b, a^5bab^{-1}, a^3c^{-1}ac^3,   $ & $[ 512, 12403, 2 ] $ \\
       & $a^2c^2a^{-2}c^{-2}, babc^{-1}b^{-1}cba$ & \\

$\#55$ & $ a^3b^{-1}a^{-1}b, a^5bab^{-1}, a^3c^{-3}ac,  $ & $ [ 512, 12403, 3 ]$ \\
       & $a^2c^2a^{-2}c^{-2}, bab^2cb^{-1}c^{-1}a $ & \\

\end{tabular}

\vspace{1cm}

 \hspace{4.5cm} \noindent \textbf{ Table 3 }

\noindent \begin{tabular}{ l  l  l  l  }

\hline
  Group No. & Relators & $ [n, m, k, t]$
\\ \hline

$\#56$ & $ ba^{-1}bc^2a, ac^2bab^{-1}, b^2cb^2c^{-1}, $ & $[ 512, 12413, 4 ] $\\
       & $ cacb^{-1}c^{-1}bc^{-1}a$ & \\

 $\#57$ &  $ bcb^{-1}ca^{-2}, a^2b^{-1}cbc, a^{-1}c^3ac^{-1},$  & $[ 512, 12414, 5 ]$\\
        & $ a^3b^{-3}a^{-1}b $ &  \\

$\#58$  &  $ ac^2ab^2,  c^2a^2b^{-2},  a^{-1}cbcab, $ & $[ 512, 12423, 4 ] $\\
        & $ c^{-2}ac^{-1}bca^{-1}b,  cbaca^{-1}bc^{-1}a^{-1}c^{-1}ba^{-1}b^{-1}$ & \\

$\#59$  &  $ac^{-1}abc^{-1}b^{-1},  b^{-1}cbaca,  cbc^{-3}b,  $ & $[ 512, 12423, 5 ] $\\
        & $ a^3b^{-1}cac^{-1}b^{-1} $ & \\

$\#60$  &  $bcbc^{-1}a^{-2},  b^2c^{-1}aca^{-1},  b^{-1}ca^{-1}cba,  $ & $ [ 512, 12424, 5 ]$\\
        & $ a^2c^3bcb $ & \\

$\#61$  &  $ b^2ab^2a^{-1},  c^2bc^2b^{-1},  (bc)^2ac^{-1}a^{-1}c,  $ & $[ 512, 53479, 3, 1 ] $\\
        & $ cbcaba^{-3},  a^3cb^{-1}a^{-1}bc^{-1},  a^2c^7b^{-1}c^{-1}b$ & \\

$\#62$  &  $  a^2ca^{-2}c^{-1}, bacbc^{-1}a, bca^{-1}c^{-1}ba,$ & $ [ 512, 53479, 3, 2 ]$\\
        & $acabc^{-1}b^{-1}, c^2bc^2b^{-1}, b^{16}ca^{-2}c $ & \\

$\#63$  &  $ cab^2c^{-1}a^{-3}, cba^{-1}cb^{-1}a^3, a^2c^{-1}b^{-1}a^{-1}cab, $ & $[ 512, 53480, 1, 1 ] $\\
        & $ b^{-2}cbaba^{-1}c, a^{-1}(bc)^2ac^2$ & \\

$\#64$  &  $ c^{-1}a^{-1}c^{-1}bab, b^{-1}c^2aba^{-1}, a^2ba^2b^{-1}, $ & $ [ 512, 53480, 1, 2 ]$\\
        & $ a^2ca^2c^{-1}, bc^3bca^2c^2$ & \\

$\#65$  &  $ c^4, a^2ba^2b^{-1}, b^{-1}cacba, acab^{-1}c^{-1}b, $ & $[ 256, 2525, 8, 1 ] $\\
        & $ b^5c^{-1}a^{-1}cb^{-1}a$ & \\

$\#66$  &  $ c^4, a^2ba^2b^{-1}, b^{-1}cacba, acab^{-1}c^{-1}b,  $ & $ [ 256, 2525, 8, 2 ]$\\
        & $b^5cb^{-1}aca^{-1} $ & \\

$\#67$  &  $ bcac^{-1}ba, b^2cb^{-2}c^{-1}, a^3bc^{-1}bac^{-1}, $ & $[ 256, 2525, 9, 1 ] $\\
        & $ a^2c^{-1}bc^3b^{-1}, a^3cab^{-1}cb^{-1}$ & \\

$\#68$  &  $ c^{-1}b^{-2}cb^2, bc^{-1}acba, b^{-1}acbca^{-3}, $ & $ [ 256, 2525, 9, 2 ]$\\
        & $c^2b^{-1}a^{-1}bc^2a, a^3b^{-1}cb^{-1}ac $ & \\

$\#69$  &  $ ac^{-2}a^{-1}c^2, ac^{-1}b^{-1}abc, acbac^{-1}b^{-3},$ & $ [ 256, 3638, 8, 1 ]$\\
        & $ c^{-1}a^2cba^2b^{-1}, a^3ca^{-1}b^{-2}c$ & \\

\end{tabular}

\vspace{1cm}

\hspace{4.5cm} \noindent \textbf{ Table 3 }

\noindent \begin{tabular}{ l  l  l  l  }

\hline
  Group No. & Relators & $ [n, m, k, t]$
\\ \hline

$\#70$  &  $ b^{-1}c^{-1}acba, bc^2b^{-1}c^{-2}, a^4c^2b^{-2},$ & $[ 256, 3638, 8, 2 ] $\\
        & $a^3cb^{-1}cab $ & \\

$\#71$  &  $a^2ca^2c^{-1}, ba^{-1}cb^{-1}ca, b^2cb^{-2}c^{-1},  $ & $ [ 256, 3639, 8, 1 ]$\\
        & $ bc^2b^{-1}c^{-2}, a^3b^{-1}ca^{-1}cb^{-1}, b^6c^2$ & \\

$\#72$  &  $a^2ca^2c^{-1}, bacb^{-1}ca^{-1}, b^2cb^{-2}c^{-1}, $ & $ [ 256, 3639, 8, 2 ]$\\
        & $bc^2b^{-1}c^{-2}, a^{-1}bc^{-1}ba^3c, b^2c^6 $ & \\

$\#73$  &  $ a^2ca^2c^{-1}, b^{-1}cbaca^{-3}, cab^2ca^{-3},$ & $[ 256, 3641, 8, 1 ] $\\
        & $ a^3b^{-1}c^2ab^{-1}$ & \\

$\#74$  &  $ a^2ca^2c^{-1}, b^{-1}cbaca^{-3}, cab^2ca^{-3},$ & $[ 256, 3641, 8, 2 ] $\\
        & $ba^{-1}c^2ba^{-3} $ & \\

$\#75$  &  $a^2b^{-2}, a^2cb^2c^{-1}, a^{-1}c^3ac^{-1}, $ & $[ 256, 3641, 9, 1 ] $\\
        & $ (bc)^2(b^{-1}c^{-1})^2, acbc^{-1}a(b^{-1}a^{-1})^2b^{-1}$ & \\

$\#76$  &  $ a^2b^2, a^2cb^{-2}c^{-1}, a^{-1}c^3ac^{-1},$ & $[ 256, 3641, 9, 2 ] $\\
        & $ (bc)^2(b^{-1}c^{-1})^2, babc^{-1}aba^{-1}cb^{-1}a^{-1}$ & \\

$\#77$  &  $ a^2ca^2c^{-1}, ab^2a^{-1}b^{-2}, b^3cb^{-1}ca^{-2}, $ & $, [ 256, 3643, 9, 1 ] $\\
        & $ bcacb^{-1}a^{-3}, bacbc^{-3}a^{-1}$ & \\

$\#78$  &  $ a^2ca^2c^{-1}, ab^2a^{-1}b^{-2}, a^2cb^{-1}cb^3, $ & $[ 256, 3643, 9, 2 ]  $\\
        & $ bcacb^{-1}a^{-3}, bac^{-1}bc^3a^{-1}$ & \\

\end{tabular}\\

\begin{center}{\textbf{Acknowledgments}}
\end{center}
The authors are grateful to the referees for their  valuable
suggestions.  The work of authors was in part supported by  Arak University. \\



\bigskip
\bigskip


{\footnotesize \pn{\bf  Authors:}\; \\ {Department of
Mathematics}, {University
of Arak, P.O.Box 38156-88349,} {Arak, Iran}\\
{\tt Email: s-fouladi@araku.ac.ir}\\
{\tt Email: r-orfi@araku.ac.ir}\\

\end{document}